
\documentclass{birkjour}
%
%
%
 \newtheorem{thm}{Theorem}[section]

 \theoremstyle{definition}
 
 \theoremstyle{remark}

 \numberwithin{equation}{section}

\begin{document}

%
%
%
%
%
%
%
%
%

\title[HORADAM OCTONIONS]
 {HORADAM OCTONIONS}

\author[Adnan KARATA\c{S}]{Adnan KARATA\c{S}}

\address{%
Pamukkale University,\\
Faculty of Arts and Sciences,\\
Department of Mathematics,\\
Denizli/TURKEY}

\email{adnank@pau.edu.tr}
\author[Serp\.{i}l HALICI]{Serp\.{i}l HALICI}

\address{%
Pamukkale University,\\
Faculty of Arts and Sciences,\\
Department of Mathematics,\\
Denizli/TURKEY}

\email{shalici@pau.edu.tr}

\subjclass{11B39, 17A20}

\keywords{Fibonacci Numbers and Generalization, Octonions}

\date{November 4, 2016}

\begin{abstract}
In this paper, first we define Horadam octonions by Horadam sequence which is a generalization of second order recurrence relations.  Also, we give some fundamental properties involving the elements of that sequence. Then, we obtain their Binet-like formula, ordinary generating function and Cassini identity. 
\end{abstract}

\maketitle
\vspace{1.5 cm}
\section{Introduction}
Four dimensional quaternion algebra is defined by W. R. Hamilton in 1843. After the definition of quaternion algebra, J. T. Graves asked Hamilton about higher dimensional algebras. He tried to define eight and sixteen dimensional algebras, but Graves encountered zero divisors on sixteen dimensional algebra. Because of zero divisors, Graves defined only eight dimensional real octonion algebra in 1843. Besides Graves' definition, A. Cayley defined also octonions in 1845. For detailed knowledge reader referred to [2]. Octonion algebra is eight dimensional, non-commutative, non-associative and normed division algebra. This algebra is used in various physical problems such as; supersymmetry, super gravity and super strings. In addition octonions are  used in many subjects such as, unified fields of dyons and gravito-dyons, electrodynamic, electromagnetism . For background the reader can find their properties in [2, 4, 5, 6, 7, 8].  \\~\\ 
The octonion algebra $\mathbb{O}$ form an eight-dimensional real algebra with a basis $\{1,e_1,e_2,$ $ \ldots, e_7 \}$. Addition operation is component-wise and multiplication operation of base elements can be made by Cayley-Dickson process with quaternions or Fano plane or multiplication table. For detailed information, see [2].\\~\\
Now, we give some fundamental definitions for the octonion algebra. Let $x$ be any octonion, i.e;
$$ x= \sum^7 _{i=0}{k_i e_i}, \,\ \,\ k_i \in \mathbb{R}$$ 
where $e_0, e_1, \ldots, e_7$ are base elements. Conjugate of any octonion $x$ can be defined as $$\overline{x}=k_0 - \sum^7 _{i=1}{k_i e_i}.$$ One can easily see that the conjugate of an octonion $x$ is an involution of first kind [19]. With the help of the conjugate, norm of an octonion $x$ can be defined as follows,
$$Nr(x)=\overline{x} \circ x=x \circ \overline{x}=k_0^2+k_1^2+ \dots + k_7^2. $$
To reader who wants to make further readings on octonions, we refer to [2, 7, 20].\\~\\
One can find many studies involving sequences with positive integers which satisfy a recursive relation in the literature. And many paper are dedicated to Fibonacci sequences and their generalization.\\
In this study we generalize Fibonacci octonions and we investigate their fundamental properties. In first section, we give definition of octonions. In second section, we give definition of Horadam numbers which is generalization of Fibonacci numbers. In third section, we introduce Horadam octonions which contains Fibonacci, Lucas, Pell, Jacobsthal and all other second order sequences. For this newly defined sequence we give the generating function, norm value, Cassini identity, a summation formula. As we give identities and properties mentioned above we present connections to earlier studies.
\section{\textbf{HORADAM NUMBERS}}
The famous Fibonacci numbers are second order recursive relation and used in various disciplines. Some lesser known second order recursive relations are Lucas numbers, Pell numbers, Jacobsthal numbers, etc.. Many mathematicians tried to generalize these second order recursive relations. Then Horadam generalized these relations and this generalization is named as Horadam sequence. Horadam sequences are firstly defined for generalization of Fibonacci and Lucas recurrences as $U_n=U_{n-1}+U_{n-2}$ in [13] and this version of Horadam numbers studied by Koshy in [18]. Then this definition is altered such that it includes other integer sequences like Jacobstal numbers. We note that there are many identities and properties about Horadam sequence [14, 15, 16].\\\\
Now, let us recall the definition of Horadam numbers. In [15], Horadam numbers are defined as \\
\begin{equation} \{ w_n(a,b;p,q) \}: w_n = p w_{n-1} + q w_{n-2}; w_0=a, w_1=b, \,\ \,\  (n \geq 2) \end{equation}
where $a,b,p,q$ are integers. Let us give four important properties that are needed. Firstly, Binet formula of Horadam sequence can be calculated using its characteristic equation. The roots of characteristic equation are
\begin{equation} \alpha = \frac{p+\sqrt{p^2+4q}}{2}, \,\ \,\ \beta= \frac{p-\sqrt{p^2+4q}}{2}. \end{equation}
Using these roots and the recurrence relation Binet formula can be given as follows
\begin{equation} w_n=\frac{A\alpha^n-B\beta^n}{\alpha - \beta}, \,\ \,\ \,\ A=b-a\beta \mbox{ and }B=b-a \alpha.\end{equation}
Secondly, the generating function for Horadam numbers is
\begin{equation} g(t)=\frac{w_0+(w_1-pw_0)t}{1-pt-qt^2}. \end{equation}
Thirdly, the Cassini identity for Horadam numbers is
\begin{equation} w_{n+1}w_{n-1}-w^2_n=q^{n-1}(pw_0 w_1- w_1^2-w_0^2q). \end{equation}
Lastly, a summation formula for Horadam numbers is
\begin{equation} \sum_{i=0}^n{w_i}=\frac{w_1-w_0 (p-1)+qw_n - w_{n+1}}{1-p-q}.\end{equation}
Horadam firstly defined Horadam numbers on $\mathbb{R}$ and then defined  Horadam numbers on $\mathbb{C}$ and $\mathbb{H}$ [14]. In addition, Halici gave a very complete survey about Horadam quaternions in [9].\\
More recently octonions have been studied by many authors. For example, the Fibonacci octonions, Pell octonions and Modified Pell octonions appeared in [3, 17, 22]. There are many studies about Fibonacci numbers over dual octonions and generalized octonions [11, 21, 23]. Also, some representations of Fibonacci octonions are considered, for example see [12]. \\
In the next section we define a new octonion sequence with Horadam components which is generalization of earlier studies, for example, see [3, 11, 12, 17, 21, 22, 23]. And then we give some fundamental properties and identities related with this sequences.
\section{\textbf{HORADAM OCTONIONS}}
In this section we introduce Horadam octonions. Horadam octonions are octonions with coefficients from Horadam sequence and we inspired the idea from [9]. In [9], author studied Binet formula, generating function, Cassini identity, summation formula and norm value for Horadam quaternions. Similarly we define Horadam octonions and we investigate Binet formula, generating function, Cassini identity, summation formula and norm value for Horadam octonions. Horadam octonions can be shown as follows
$$\mathbb{O}G_{n}=w_n e_0 + w_{n+1} e_1 + \dots + w_{n+7} e_7$$
where $w_n$ as in equation $(2.1)$. Because of its coefficients this new octonion can be called Horadam octonion. After some necessary
calculations we acquire the following recurrence relation;
$$\mathbb{O}G_{n+1}=p\mathbb{O}G_{n}+q\mathbb{O}G_{n-1}.$$
In this section we give Binet formula, generating function, Cassini identities, summation and norm of these octonions. Binet formula is very useful for finding desired Horadam octonion and this formula takes part at many theorems' proof.\\
The Cassini identity yields many fascinating by-products and this formula is used to establish interesting results concerning with some sequences.\\
\begin{thm}
The Binet formula for Horadam octonions is \\
\begin{equation}\mathbb{O}G_{n}=\frac{A\underline{\alpha}\alpha^n-B\underline{\beta}\beta^n}{\alpha - \beta}, \end{equation} \\
where $\underline{\alpha}=1e_0 + \alpha e_1+ \alpha^2 e_2+ \dots + \alpha^7 e_7$ and $\underline{\beta}=1e_0 + \beta e_1+ \beta^2 e_2+ \dots + \beta^7 e_7.$
\end{thm}
\begin{proof}
Binet formula for Horadam octonions can be calculated similar to Binet formula for Horadam sequence. By using characteristic equation
\begin{equation}t^2-pt-q=0.\end{equation}
The roots of characteristic equation is
\begin{equation} \alpha = \frac{p+\sqrt{p^2+4q}}{2}, \,\ \,\ \beta= \frac{p-\sqrt{p^2+4q}}{2}. \end{equation}
Using these roots and the recurrence relation Binet formula can be written as follows
\begin{equation}\mathbb{O}G_{n}=\frac{A\underline{\alpha}\alpha^n-B\underline{\beta}\beta^n}{\alpha - \beta}\end{equation}
where
\begin{equation}\underline{\alpha}=1e_0 + \alpha e_1+ \alpha^2 e_2+ \dots + \alpha^7 e_7\mbox{ and }\underline{\beta}=1e_0 + \beta e_1+ \beta^2 e_2+ \dots + \beta^7 e_7.\end{equation}
\end{proof}
In the Binet formula of Horadam octonions if we take $A=B=1$ and calculating the value of $\alpha, \beta$ then we have $(3.2)$ \\
\begin{equation} MP_n =\frac{\underline{\alpha}\alpha^n+\underline{\beta}\beta^n}{\alpha - \beta}.\end{equation}
We should note that the equation $(3.6)$ is Binet formula of Modified Pell octonions which is given by Catarino in [3]. Also, if we take $A=B=1,$ and calculate the value $\alpha, \beta$ with respect to its recurrence relation, we have Binet formula for Fibonacci octonions which is given by Ke\c{c}ilio\u{g}lu and Akku\c{s} in [17].\\
In the following theorem we give the generating function for Horadam octonions.
\begin{thm}
The generating function for Horadam octonions is \\
\begin{equation}\frac{\mathbb{O}G_{0}+(\mathbb{O}G_{1}-p\mathbb{O}G_{0})t}{1-pt-qt^2}.\end{equation}
\end{thm}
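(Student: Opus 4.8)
The plan is to reproduce the classical derivation of the generating function of a second order linear recurrence, carried out in the octonion setting. First I would introduce the formal generating series $g(t)=\sum_{n=0}^{\infty}\mathbb{O}G_{n}t^{n}$, whose coefficients are precisely the Horadam octonions, and aim to show that the product $(1-pt-qt^{2})\,g(t)$ collapses to the claimed numerator $\mathbb{O}G_{0}+(\mathbb{O}G_{1}-p\mathbb{O}G_{0})t$.

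Next I would form the two auxiliary series $pt\,g(t)=\sum_{n=0}^{\infty}p\,\mathbb{O}G_{n}t^{n+1}$ and $qt^{2}g(t)=\sum_{n=0}^{\infty}q\,\mathbb{O}G_{n}t^{n+2}$, reindex each so that every series is written as a sum over a common power $t^{n}$, and then subtract them from $g(t)$. Collecting the coefficient of $t^{n}$, each term with $n\geq 2$ equals $\mathbb{O}G_{n}-p\,\mathbb{O}G_{n-1}-q\,\mathbb{O}G_{n-2}$, which vanishes by the recurrence $\mathbb{O}G_{n+1}=p\mathbb{O}G_{n}+q\mathbb{O}G_{n-1}$ recorded earlier in this section. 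Only the $n=0$ and $n=1$ contributions survive, giving $\mathbb{O}G_{0}+(\mathbb{O}G_{1}-p\mathbb{O}G_{0})t$; dividing through by $1-pt-qt^{2}$ then produces the stated formula.

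The one point that deserves care, and the only place the octonion setting departs from the scalar one, is that octonion multiplication is neither commutative nor associative. Here, however, the indeterminate $t$ and the recurrence coefficients $p,q$ are real scalars, so they commute and associate with every octonion; multiplying the series by $t$, $pt$, or $qt^{2}$ therefore acts componentwise on each of the eight coordinates. Consequently the rearrangement and the term-by-term cancellation are fully justified, and the argument amounts to eight simultaneous copies of the scalar computation. I do not expect a genuine obstacle from the non-associativity; the actual work is the bookkeeping of the reindexing together with the appeal to the recurrence relation established above.
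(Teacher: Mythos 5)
Your proposal is correct and follows essentially the same route as the paper: form $g(t)=\sum_{n\geq 0}\mathbb{O}G_{n}t^{n}$, subtract $ptg(t)$ and $qt^{2}g(t)$, and use the recurrence $\mathbb{O}G_{n+1}=p\mathbb{O}G_{n}+q\mathbb{O}G_{n-1}$ to cancel all terms of degree at least two. In fact your write-up is more explicit than the paper's (which compresses the cancellation into ``if we made necessary calculations''), and your remark that the scalars $p$, $q$, $t$ commute and associate with octonions is a worthwhile justification the paper leaves implicit.
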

\begin{proof}
To prove this claim, firstly, we need to write generating function for Horadam octonions;
\begin{equation}g(t)=\mathbb{O}G_{0}t^0+ \mathbb{O}G_{1}t +\mathbb{O}G_{2}t^2+ \dots+ \mathbb{O}G_{n}t^n+\dots \end{equation} \\
Secondly, we need to calculate $ptg(t)$ and $qt^2g(t)$ as the following equations;\\
\begin{equation}ptg(t)=\sum^\infty_{n=0}{p\mathbb{O}G_{n}t^{n+1}} \mbox{ and } qt^2g(t)=\sum^\infty_{n=0}{q\mathbb{O}G_{n}t^{n+2}}. \end{equation}\\
Finally, if we made necessary calculations, then we get the following equation  \\
\begin{equation}g(t)=\frac{\mathbb{O}G_{0}+(\mathbb{O}G_{1}-p\mathbb{O}G_{0})t}{1-pt-qt^2}\end{equation}which is the generating function for Horadam octonions
\end{proof}
Using the initial values for Modified Pell octonions and the equation $(3.7)$, we obtain
\begin{equation} g(t)=\frac{MPO_{0}+(MPO_{1}-2MPO_{0})t}{1-2t-t^2}.\end{equation}
In fact the formula $(3.11)$ is the generating function for Modified Pell octonions given by Catarino in [3]. In addition if we take initial values of Fibonacci octonions and calculate equation $(3.7)$ accordingly then we get generating function for Fibonacci octonions. So, generating function for Horadam octonions generalizes Fibonacci octonions, Lucas, octonions, Modified Pell octonions, etc.. \\\\
In the following theorem, we state two different Cassini identities which occur from non-commutativity of octonion multiplication.\\
\begin{thm}
For Horadam octonions the Cassini formulas are as follows
\begin{equation} i) \,\ \mathbb{O}G_{n-1} \circ \mathbb{O}G_{n+1}-\mathbb{O}G^2_{n}=\frac{AB(\alpha \beta)^{n-1}(\beta \underline{\alpha}\underline{\beta}-\alpha \underline{\beta}\underline{\alpha})}{\alpha-\beta}, \end{equation}
\begin{equation} ii) \,\  \mathbb{O}G_{n+1} \circ \mathbb{O}G_{n-1}-\mathbb{O}G^2_{n}=\frac{AB(\alpha \beta)^{n-1}(\beta \underline{\beta}\underline{\alpha}-\alpha \underline{\alpha}\underline{\beta})}{\alpha-\beta}. \end{equation}
\end{thm}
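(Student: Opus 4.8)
The plan is to substitute the Binet formula of Theorem 3.1 directly into the left-hand sides of both identities and simplify, keeping careful track of the order of the non-commuting octonion factors $\underline{\alpha}$ and $\underline{\beta}$. Since $\alpha$ and $\beta$ are scalars lying in the center of $\mathbb{O}$, they commute and associate freely with every octonion, so they may be pulled out of each product; only $\underline{\alpha}$ and $\underline{\beta}$ must retain their left-to-right position.

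First I would write the three octonions $\mathbb{O}G_{n-1}$, $\mathbb{O}G_{n+1}$, $\mathbb{O}G_{n}$ via equation $(3.1)$, each as $\frac{A\underline{\alpha}\alpha^{k}-B\underline{\beta}\beta^{k}}{\alpha-\beta}$ for the appropriate index $k$. For part (i) I would expand $\mathbb{O}G_{n-1}\circ\mathbb{O}G_{n+1}$ into four terms whose coefficients $A^{2}\alpha^{2n}$, $-AB\alpha^{n-1}\beta^{n+1}$, $-AB\alpha^{n+1}\beta^{n-1}$, $B^{2}\beta^{2n}$ multiply $\underline{\alpha}\,\underline{\alpha}$, $\underline{\alpha}\,\underline{\beta}$, $\underline{\beta}\,\underline{\alpha}$, $\underline{\beta}\,\underline{\beta}$ respectively, and similarly expand $\mathbb{O}G_{n}^{2}$.

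The key step is the subtraction. The diagonal terms carrying $\underline{\alpha}\,\underline{\alpha}$ and $\underline{\beta}\,\underline{\beta}$ have identical coefficients $A^{2}\alpha^{2n}$ and $B^{2}\beta^{2n}$ in both products and hence cancel, leaving only the mixed $\underline{\alpha}\,\underline{\beta}$ and $\underline{\beta}\,\underline{\alpha}$ terms. Factoring $(\alpha\beta)^{n-1}$ out of the surviving exponents produces the coefficient $\beta(\alpha-\beta)$ in front of $\underline{\alpha}\,\underline{\beta}$ and $-\alpha(\alpha-\beta)$ in front of $\underline{\beta}\,\underline{\alpha}$; one factor of $(\alpha-\beta)$ then cancels against the $(\alpha-\beta)^{2}$ in the denominator, giving exactly the right-hand side of (i). Part (ii) follows from the identical computation applied to $\mathbb{O}G_{n+1}\circ\mathbb{O}G_{n-1}$: because octonion multiplication is non-commutative, reversing the index order interchanges which of $\underline{\alpha}\,\underline{\beta}$ and $\underline{\beta}\,\underline{\alpha}$ receives each coefficient, which is precisely why the two Cassini formulas differ.

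The main obstacle is bookkeeping rather than conceptual: one must never silently commute $\underline{\alpha}$ past $\underline{\beta}$, since $\underline{\alpha}\,\underline{\beta}\neq\underline{\beta}\,\underline{\alpha}$ in general, and the very existence of two distinct Cassini identities rests on this. I would double-check the exponent arithmetic $\alpha^{n-1}\beta^{n+1}=(\alpha\beta)^{n-1}\beta^{2}$ and $\alpha^{n+1}\beta^{n-1}=(\alpha\beta)^{n-1}\alpha^{2}$, together with the cancellations $\alpha\beta-\beta^{2}=\beta(\alpha-\beta)$ and $\alpha\beta-\alpha^{2}=-\alpha(\alpha-\beta)$, since a single sign error here would corrupt the final numerator.
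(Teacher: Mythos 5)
Your proposal is correct and follows essentially the same route as the paper: substitute the Binet formula of Theorem 3.1 into the left-hand side, expand while respecting the non-commutativity of $\underline{\alpha}$ and $\underline{\beta}$, cancel the diagonal $\underline{\alpha}\,\underline{\alpha}$ and $\underline{\beta}\,\underline{\beta}$ terms, and factor out $(\alpha\beta)^{n-1}$ and one power of $(\alpha-\beta)$. In fact you supply precisely the bookkeeping that the paper compresses into the phrase ``if necessary calculations are made,'' including the correct explanation of why the two identities differ only in the placement of $\underline{\alpha}\,\underline{\beta}$ versus $\underline{\beta}\,\underline{\alpha}$.
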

\begin{proof}
We use Binet formula in order to prove equation $(3.12)$ \\
$$\mathbb{O}G_{n+1} \circ \mathbb{O}G_{n-1}-\mathbb{O}G^2_{n}=$$\
$$= \frac{A\underline{\alpha}\alpha^{n-1}-B\underline{\beta}\beta^{n-1}}{\alpha - \beta} \frac{A\underline{\alpha}\alpha^{n+1}-B\underline{\beta}\beta^{n+1}}{\alpha - \beta}- \bigg( \frac{A\underline{\alpha}\alpha^{n}-B\underline{\beta}\beta^{n}}{\alpha - \beta} \bigg)^2.$$ \\
If necessary calculations are made, we obtain \\
$$\mathbb{O}G_{n+1} \circ \mathbb{O}G_{n-1}-\mathbb{O}G^2_{n}=\frac{AB(\alpha \beta)^{n-1}(\beta \underline{\beta}\underline{\alpha}-\alpha \underline{\alpha}\underline{\beta})}{\alpha-\beta}$$ \\
which is desired. In a similar way, the equation $(3.13)$ can be easily obtain. 
\end{proof}
Cassini identities are studied for Fibonacci octonions in [12, 17] and for Modified Pell octonions in [3]. \\\\
If we consider as $a=0,b=1$ and $p=1, q=1$ in the equation $(3.12)$ and $(3.13)$ we have the Cassini identities of Fibonacci octonions. Hence, we conclude that Horadam octonions are a generalization of all the Fibonacci-like octonions.\\
Summation formula for the first $n+1$ Horadam octonions can be given as follows. \\ 
\begin{thm}
The sum formula for Horadam octonions is follows, \\
\begin{equation} \sum_{i=0}^n{\mathbb{O}G_i}=\frac{1}{\alpha-\beta}\big( \frac{B\underline{\beta}\beta^{n+1}}{1-\beta}-\frac{A\underline{\alpha}\alpha^{n+1}}{1-\alpha} \big) + K,\end{equation} \\
where $K$ is as follows, \\
$$K=\frac{A\underline{\alpha}(1-\beta)-B\underline{\beta}(1-\alpha)}{(\alpha-\beta)(1-\alpha)(1-\beta)}.$$
\end{thm}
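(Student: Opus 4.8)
The plan is to establish the summation formula by directly exploiting the Binet formula for Horadam octonions (equation $(3.5)$), reducing the octonionic sum to two scalar geometric series. Since $\underline{\alpha}$ and $\underline{\beta}$ are fixed octonions independent of the summation index $i$, and $A, B, \alpha, \beta$ are scalars, each $\mathbb{O}G_i$ is a scalar multiple of the constant octonions $A\underline{\alpha}$ and $B\underline{\beta}$. This linearity in the octonion factors is what makes the whole computation routine once set up correctly.

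**The key steps**, in order, would be as follows. First I would substitute the Binet formula into the sum:
$$\sum_{i=0}^n \mathbb{O}G_i = \frac{1}{\alpha-\beta}\sum_{i=0}^n\left(A\underline{\alpha}\alpha^i - B\underline{\beta}\beta^i\right).$$
Second, I would pull the constant octonions out of the sums, leaving two finite geometric series in the scalars $\alpha$ and $\beta$:
$$\sum_{i=0}^n \mathbb{O}G_i = \frac{1}{\alpha-\beta}\left(A\underline{\alpha}\sum_{i=0}^n\alpha^i - B\underline{\beta}\sum_{i=0}^n\beta^i\right).$$
Third, I would apply the standard closed form $\sum_{i=0}^n x^i = \frac{1-x^{n+1}}{1-x}$ (valid since $\alpha,\beta \neq 1$ for the relevant parameter choices) to each geometric series, yielding
$$\sum_{i=0}^n \mathbb{O}G_i = \frac{1}{\alpha-\beta}\left(A\underline{\alpha}\,\frac{1-\alpha^{n+1}}{1-\alpha} - B\underline{\beta}\,\frac{1-\beta^{n+1}}{1-\beta}\right).$$

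**The final step** is purely algebraic bookkeeping: I would separate each fraction into a constant term (the ``$1$'' in the numerator) and an $n$-dependent term (the $\alpha^{n+1}$ or $\beta^{n+1}$ part). The $n$-dependent pieces combine to give exactly the bracketed expression $\frac{B\underline{\beta}\beta^{n+1}}{1-\beta} - \frac{A\underline{\alpha}\alpha^{n+1}}{1-\alpha}$ in $(3.14)$, while the two constant terms $\frac{A\underline{\alpha}}{1-\alpha}$ and $\frac{-B\underline{\beta}}{1-\beta}$ combine over the common denominator $(1-\alpha)(1-\beta)$ into the octonion $K$. The main obstacle here is merely the careful handling of signs and the common-denominator combination that produces $K$; because all the octonion factors commute through scalar multiplication and the noncommutativity of octonion products never enters (there is no product of two distinct octonions anywhere), the noncommutativity issues that complicated the Cassini identities in Theorem~3.5 do not arise. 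I would therefore expect this proof to require no structural insight beyond the geometric series formula, with the only care needed in verifying that the collected constant terms match the stated $K$ exactly.
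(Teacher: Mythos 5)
Your proposal is correct and follows essentially the same route as the paper's own proof: substitute the Binet formula, pull the constant octonions $A\underline{\alpha}$, $B\underline{\beta}$ out of the sum, evaluate the two scalar geometric series, and recombine the constant terms over $(1-\alpha)(1-\beta)$ to obtain $K$. If anything, your write-up is more careful than the paper's, which leaves stray summation symbols and misplaced indices in its intermediate displays.
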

\begin{proof}
Using the Binet formula we can calculate the summation formula as follows
$$\sum_{i=0}^n{\mathbb{O}G_i}=\frac{A\underline{\alpha}\alpha^{n}-B\underline{\beta}\beta^{n}}{\alpha-\beta}=\frac{A\underline{\alpha}}{\alpha-\beta} \sum_{i=0}^n \alpha^{n}-\frac{B\underline{\beta}}{\alpha-\beta} \sum_{i=0}^n \beta^{n} $$
From geometric series we get the following
$$\sum_{i=0}^n{\mathbb{O}G_i}=\frac{A\underline{\alpha}}{\alpha-\beta}\sum_{i=0}^n \frac{1-\alpha^{n+1}}{1-\alpha}-\frac{B\underline{\beta}}{\alpha-\beta} \sum_{i=0}^n \frac{1-\beta^{n+1}}{1-\beta}. $$
After direct calculations we will get the explicit result as 
$$\sum_{i=0}^n{\mathbb{O}G_i}=\frac{1}{\alpha-\beta}\big( \frac{B\underline{\beta}\beta^{n+1}}{1-\beta}-\frac{A\underline{\alpha}\alpha^{n+1}}{1-\alpha} \big) +\frac{A\underline{\alpha}(1-\beta)-B\underline{\beta}(1-\alpha)}{(\alpha-\beta)(1-\alpha)(1-\beta)}.$$
\end{proof}
Summation formula for Horadam octonions is a generalization of summation formula for Fibonacci octonions which can be calculated according to its initial values as follows.
$$\sum_{i=0}^n{\mathbb{O}_i}=\frac{1}{\sqrt{5}}\bigg( \frac{\underline{\beta}\beta^{n+1}}{1-\beta}-\frac{\underline{\alpha}\alpha^{n+1}}{1-\alpha} \bigg) -\frac{\underline{\alpha}(1-\beta)-\underline{\beta}(1-\alpha)}{\sqrt{5}}.$$

Using the initial values and roots of characteristic equation we state the norm of $nth$ Horadam octonion as follows.\\
\begin{thm}
The norm of nth Horadam octonion is
\begin{equation} \small Nr(\mathbb{O}G_n)=\frac{A^2\alpha^{2n}(1+\alpha^2+\alpha^4+\dots+\alpha^{14})+B^2\beta^{2n}(1+\beta^2+\beta^4+\dots+\beta^{14})}{(\alpha-\beta)^2}-L\end{equation}
where $L$ is
$$L=\frac{2AB(-q)^n(a+(-q)+\dots+(-q)^7)}{(\alpha- \beta)^2}.$$
\end{thm}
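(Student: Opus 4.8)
The plan is to work directly from the definition of the norm. Since $\mathbb{O}G_n = w_n e_0 + w_{n+1}e_1 + \dots + w_{n+7}e_7$, the formula $Nr(x) = k_0^2 + \dots + k_7^2$ gives at once
$$Nr(\mathbb{O}G_n) = \sum_{i=0}^{7} w_{n+i}^2,$$
so the entire theorem reduces to evaluating this sum of squares of eight consecutive Horadam numbers in closed form.

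First I would insert the scalar Binet formula (2.3), namely $w_m = (A\alpha^m - B\beta^m)/(\alpha-\beta)$, into each summand, giving
$$w_{n+i}^2 = \frac{A^2\alpha^{2(n+i)} - 2AB(\alpha\beta)^{n+i} + B^2\beta^{2(n+i)}}{(\alpha-\beta)^2}.$$
Summing over $i=0,\dots,7$ and pulling the common powers $\alpha^{2n}$, $\beta^{2n}$ and $(\alpha\beta)^n$ out of the three resulting geometric sums splits $Nr(\mathbb{O}G_n)$ into three parts. The first two parts are precisely $A^2\alpha^{2n}(1+\alpha^2+\dots+\alpha^{14})$ and $B^2\beta^{2n}(1+\beta^2+\dots+\beta^{14})$ over $(\alpha-\beta)^2$, which is the stated numerator.

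The step that turns the cross term into $L$ is the Vieta relation read off from the characteristic equation (3.2): since $\alpha$ and $\beta$ are the roots of $t^2 - pt - q = 0$, their product is $\alpha\beta = -q$. Therefore $(\alpha\beta)^{n+i} = (-q)^{n+i}$ and the cross contribution is
$$-\frac{2AB}{(\alpha-\beta)^2}\,(\alpha\beta)^n\sum_{i=0}^{7}(\alpha\beta)^i = -\frac{2AB(-q)^n\bigl(1+(-q)+\dots+(-q)^7\bigr)}{(\alpha-\beta)^2} = -L,$$
which pins down $L$ exactly (I note that the leading ``$a$'' inside the parenthesis of the stated $L$ should read ``$1$''). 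Collecting the three parts yields the asserted identity.

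I do not expect a genuine obstacle here: the argument is a substitution followed by bookkeeping of finite geometric series, and the one conceptual ingredient is the identity $\alpha\beta = -q$, after which the cross term is simply a geometric sum in $-q$. As a consistency check, one may observe that $1+\alpha^2+\dots+\alpha^{14} = Nr(\underline{\alpha})$ and likewise $1+\beta^2+\dots+\beta^{14} = Nr(\underline{\beta})$, so the result is the faithful octonionic shadow of the scalar computation.
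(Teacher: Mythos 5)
Your proposal is correct and follows essentially the same route as the paper: the paper's proof also starts from $Nr(\mathbb{O}G_{n})=w_n^2+w_{n+1}^2+\dots+w_{n+7}^2$, substitutes the Binet formula, and invokes $\alpha\beta=-q$ to handle the cross terms, merely compressing into ``making necessary calculations'' the geometric-series bookkeeping you spell out. Your observation that the leading ``$a$'' in the stated $L$ should be ``$1$'' is a correct identification of a typographical error in the paper, as your explicit evaluation of the cross term $-2AB(-q)^n\bigl(1+(-q)+\dots+(-q)^7\bigr)/(\alpha-\beta)^2$ confirms.
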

\begin{proof}
We defined $nth$ Horadam octonion as
$$\mathbb{O}G_{n}=w_n e_0 + w_{n+1} e_1 + \dots + w_{n+7} e_7.$$
So, norm of $nth$ Horadam octonion is
$$Nr(\mathbb{O}G_{n})=\mathbb{O}G_{n}\overline{\mathbb{O}G_{n}}=\overline{\mathbb{O}G_{n}}\mathbb{O}G_{n}=w_n^2+w_{n+1}^2+ \dots + w_{n+7}^2.$$
Making necessary calculations and using the equations $\alpha+\beta=p$ and $ \alpha\beta = -q$, we will get the explicit form of desired result as
$$ \small Nr(\mathbb{O}G_n)=\frac{A^2\alpha^{2n}(1+\alpha^2+\alpha^4+\dots+\alpha^{14})}{(\alpha-\beta)^2} + $$
$$\frac{B^2\beta^{2n}(1+\beta^2+\beta^4+\dots+\beta^{14})}{(\alpha-\beta)^2} - \frac{2AB(-q)^n(a+(-q)+\dots+(-q)^7)}{(\alpha- \beta)^2}.$$
\end{proof}
One can observe that the norm of Horadam octonions is generalization of Pell octonions and Pell-Lucas octonions. To get desired norms on [3] one can take values $a=0,b=1,p=2,q=1$ and $a=2,b=2,p=2,q=1$, respectively.
\maketitle
\section{\textbf{Conclusion}}
In this paper, we define Horadam octonions which is studied for the first time. Moreover, we give their Binet formula and some identities related with them. We demonstrate that our results are generalization of the other studies in this area. It should be noted that the further studies can be to investigated to get new identities about Horadam octonions.

\end{document}